\numberwithin{equation}{section}
\newtheorem{theorem}{\noindent Theorem}[section]
\newtheorem{lemma}{\noindent Lemma}[section]
\newtheorem{corollary}{\noindent Corollary}[section]
\def\N{{\mathbb N}}
\def\Z{{\mathbb Z}}
\def\R{{\mathbb R}}
\def\A{{\mathcal A}}
\def\S{{\mathcal S}}
\begin{document}

\baselineskip=17pt

\title{THE NUMBER OF INTEGER POINTS CLOSE TO A POLYNOMIAL}

\author{Patrick Letendre}

\date{}

\maketitle

\renewcommand{\thefootnote}{}

\footnote{2010 \emph{Mathematics Subject Classification}: 11J54.}

\footnote{\emph{Key words and phrases}: fractional parts, polynomials.}

\renewcommand{\thefootnote}{\arabic{footnote}}
\setcounter{footnote}{0}

\vspace{-2cm}

\begin{abstract}
Let $f(x)$ be a polynomial of degree $n \ge 1$ with real coefficients and let $X \ge 2$ and $\delta \ge 0$ be real numbers. Let $\|\cdot\|$ be the distance to the nearest integer. We obtain upper bounds for the number of solutions to the inequality $\|f(x)\| \le \delta$ with $x \in [X,2X] \cap \N$.
\end{abstract}

\section{Introduction}

Consider a polynomial with real coefficients
\begin{equation}\label{f}
f(x) := \alpha_n x^n + \dots + \alpha_1 x + \alpha_0
\end{equation}
of degree $n \ge 1$. Let $\delta, X \in \R$ be such that $0 \le \delta \le 1/4$ and $X \ge 2$. Following the notation of the papers of Huxley and Sargos \cite{mnh:ps} and \cite{mnh:ps:2}, we put
\begin{equation}\label{gamma}
\Gamma_{\delta} := \{(x,y) \in \R:\ x \in [X,2X], |y-f(x)| \le \delta\}
\end{equation}
and
$$
\S := \#(\Gamma_{\delta} \cap \Z^2).
$$
We are interested in estimating $\S$.

In the case where $\delta=0$, the problem is to estimate the number of integer solutions to the equation $f(x)=y$ with $x \in [X,2X]$. We suppose at first that at least one of the coefficients is irrational. We then show that there are at most $n$ solutions. Indeed, if we can find $n+1$ solutions then, by using the fact that the Vandermonde determinant is nonzero, we can solve the system
$$
\begin{pmatrix} 
y_1 \\
\vdots \\
y_{n+1}
\end{pmatrix} = \begin{pmatrix} 
1 & x_1 & \cdots & x_1^n \\
\vdots & \vdots & \cdots & \vdots \\
1 & x_{n+1} & \cdots & x_{n+1}^n 
\end{pmatrix}\begin{pmatrix} 
\alpha_0 \\
\vdots \\
\alpha_{n}
\end{pmatrix}
$$
and thus recover the coefficients of $f(x)$. Since these are rational, we get a contradiction. Moreover this upper bound is optimal since the equation
$$
y = \varkappa x (x-1) \cdots (x-n+1)
$$
has the $n$ solutions $(0,0)$,\dots, $(n-1,0)$ no matter the value of $\varkappa \neq 0$.

Now, in the case where all the coefficients are rational, then the problem is very different and there are infinitely many solutions with $x \in \Z$ in general. The papers \cite{svk} and \cite{svk:ts} provide a very satisfactory study of this question.

In our situation, the case $\delta>0$ is more interesting.  We first establish in Theorem 1.1 that the number of solutions is essentially less than a quantity that we will assume to be small plus the contribution arising from a single polynomial of degree at most $n$ with rational coefficients. We therefore need to solve the problem of the previous paragraph but with an unknown polynomial.

\begin{theorem}
Let $f(x)$ be as in \eqref{f}. Then,
$$
\S \ll_n \delta^{\frac{2}{n(n+1)}}X+\mathcal{R}
$$
where $\mathcal{R}$ is the maximal number of integer points in $\Gamma_{\delta}$ that are all on a polynomial of degree at most $n$.
\end{theorem}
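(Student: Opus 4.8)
The engine of the proof is an integer-valued generalized Vandermonde determinant. Given $n+2$ points $(x_1,y_1),\dots,(x_{n+2},y_{n+2})$ of $\Gamma_{\delta}\cap\Z^2$ with the $x_k$ distinct, set
$$
D:=\det\begin{pmatrix}1 & x_1 & \cdots & x_1^{\,n} & y_1\\ \vdots & & \cdots & & \vdots\\ 1 & x_{n+2} & \cdots & x_{n+2}^{\,n} & y_{n+2}\end{pmatrix}\in\Z .
$$
Because $f$ has degree exactly $n$, replacing each $y_k$ by $f(x_k)=\sum_{j=0}^{n}\alpha_j x_k^{\,j}$ makes the last column a linear combination of the preceding ones, so that determinant vanishes; hence $D$ equals the same determinant with last column $(y_k-f(x_k))_{k}$, whose entries lie in $[-\delta,\delta]$. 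Expanding along that column, each $(n+1)\times(n+1)$ minor is a Vandermonde determinant in $n+1$ of the $x_k$, so it is bounded in absolute value by $\ell^{\,n(n+1)/2}$ with $\ell:=\max_k x_k-\min_k x_k$; therefore $|D|\le(n+2)\,\delta\,\ell^{\,n(n+1)/2}$. Put $H:=\big((n+2)\delta\big)^{-2/(n(n+1))}\asymp_n\delta^{-2/(n(n+1))}$. If $\ell<H$ then $|D|<1$, hence $D=0$; and since $D$ is affine in each $y_k$ with nonzero (Vandermonde) coefficient, $D=0$ says precisely that the $n+2$ points lie on the single polynomial of degree $\le n$ through any $n+1$ of them. (If $\delta$ is so large that $H<1$ then $\delta\asymp_n1$ and $\S\le X+1\ll_n\delta^{2/(n(n+1))}X$ trivially, so I may assume $H\ge1$.)

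Next I would sort the $\S$ points by abscissa, $(\xi_1,\eta_1),\dots,(\xi_{\S},\eta_{\S})$, and look at the consecutive blocks $B_i=\{(\xi_i,\eta_i),\dots,(\xi_{i+n+1},\eta_{i+n+1})\}$, calling $B_i$ \emph{tight} if $\xi_{i+n+1}-\xi_i<H$. From the telescoping identity
$$
\sum_i(\xi_{i+n+1}-\xi_i)=\sum_j(\xi_{j+1}-\xi_j)\,\#\{i:i\le j\le i+n\}\le(n+1)(\xi_{\S}-\xi_1)\le(n+1)X
$$
the number of non-tight blocks is $\ll_nX/H\ll_n\delta^{2/(n(n+1))}X$, and since each non-tight block excludes at most $n+2$ points, the number of points lying in no tight block is $\ll_n\delta^{2/(n(n+1))}X$ as well. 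Along a maximal run of consecutive tight blocks the interpolating polynomials coincide, because two consecutive tight blocks share $n+1$ points and hence determine the same polynomial of degree $\le n$; so all points of such a run lie on one polynomial of degree $\le n$, and the run contains at most $\mathcal{R}$ points.

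What remains — and this is the crux — is that the bookkeeping so far only bounds $\S$ by $\ll_n\delta^{2/(n(n+1))}X$ plus $\mathcal{R}$ \emph{times} the number of runs, whereas the statement allows a single $\mathcal{R}$. To collapse this I would analyse which polynomials the runs can live on. From $\ge n+1$ points of $\Gamma_{\delta}$ lying on a polynomial $g$ of degree $\le n$, spread over an interval of length $\Lambda$, Lagrange interpolation through well-spaced nodes gives $|g-f|\ll_n\delta$ on that interval, so the top coefficients of $g$ agree with those of $f$ to within small powers of $\delta/\Lambda$; since the coefficients of $g$ are rationals whose denominators are controlled by Vandermonde products which are at most a fixed power of $H$, two runs whose spans are long enough must live on polynomials with the same leading behaviour, and one descends to the lower coefficients. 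The runs that survive this on a common $g$ contribute at most $\mathcal{R}$; the families obtained by adding an integer constant to $g$, or the runs whose spans are short, are each controlled by the number of non-tight blocks and so feed only the main term, essentially because the set of integers $x$ with $\|g(x)-f(x)\|\le\delta$ cannot be large unless it already witnesses $\mathcal{R}$. Making the interpolation constants, the denominator bounds, and this last trichotomy fit together — and disposing of the extreme ranges of $\delta$, where the Vandermonde sizes become comparable to the gaps between the competing rationals — is, I expect, the technical heart of the argument.
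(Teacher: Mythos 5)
Your first half is sound and is essentially the paper's own opening move: the integer determinant $\Lambda(M_1,\dots,M_{n+2})$, the bound $|\Lambda|\le (n+2)\delta\,\ell^{n(n+1)/2}$, the dichotomy ``$\Lambda\neq 0$ forces a gap of length $\gg_n\delta^{-2/(n(n+1))}$, $\Lambda=0$ puts the points on a rational polynomial of degree $\le n$,'' and the resulting decomposition of $\Gamma_\delta\cap\Z^2$ into short pieces (absorbed into the main term) and major arcs. You have also correctly located the crux: this bookkeeping gives $\S\ll_n\delta^{2/(n(n+1))}X+\mathcal{R}\cdot(\#\text{runs})$, and the whole difficulty is to replace $\mathcal{R}\cdot(\#\text{runs})$ by $\delta^{2/(n(n+1))}X+\mathcal{R}$.

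The gap is that your proposed mechanism for collapsing the runs does not work and is missing the actual arithmetic inputs. Your idea is to show that long runs must all lie on (essentially) the same polynomial $g$; but distinct major arcs genuinely can carry distinct rational polynomials, and the assertion that ``the set of integers $x$ with $\|g(x)-f(x)\|\le\delta$ cannot be large unless it already witnesses $\mathcal{R}$'' is circular --- bounding exactly that set \emph{is} the problem. What the paper does instead is bound the \emph{total} number of points on all major arcs but one by $\ll\delta^{2/(n(n+1))}X$, using three ingredients absent from your sketch: (a) Konyagin's theorem on polynomial congruences (Lemmas \ref{qsol} and \ref{qdensity}), which says that a major arc of denominator $q$ and length $L$ carries $\ll L/q^{1/n-\varepsilon_n}+O_n(1)$ integer points --- note this is a statement about the \emph{denominator} $q$, not merely the upper bound $q\le X^{O(n^2)}$ you extract from Vandermonde minors; (b) a divided-difference separation lemma (Lemma \ref{rep-loc}, after Huxley) showing that a proper major arc of length $L_i$ and denominator $Q_i$ is followed, within distance $K_i\gg L_i/(Q_i\delta)^{1/n}$, by a point off its curve, so that $\sum K_i\ll X$ converts the per-arc bound $\#\A_i\ll L_iQ_i^{-1/n+\varepsilon_n}$ into a total of $\ll\delta^{1/n-\varepsilon_n}X\ll\delta^{2/(n(n+1))}X$; and (c) the dichotomy $q\gtrless c_2/\delta$, with the large-$q$ arcs handled by (a) alone and the small-$q$ arcs by (a) plus (b), together with Lemma \ref{mult} to control consecutive arcs sharing an equation. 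After all arcs but the single largest proper one are absorbed into the main term, that last arc is the one charged to $\mathcal{R}$. Without (a) and (b) --- i.e., without a \emph{lower} bound on the spacing of solutions of $P(x)\equiv 0\pmod 1$ in terms of $q$ and a lower bound on the separation between consecutive arcs --- your ``interpolation plus denominator control'' outline has no way to prevent $\asymp X/H$ runs each of size $\asymp\mathcal{R}$.
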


\begin{corollary}
Let $f(x)$ be as in \eqref{f}. Assume that the inequality
$$
\left|\alpha_n-\frac{r}{s}\right| \le \frac{1}{s^2}
$$
holds for some integers $r \in \Z$ and $s \in [1,X^n]$ with $gcd(r,s)=1$. Then,
$$
\S \ll_{n,\epsilon} \delta^{\frac{2}{n(n+1)}}X+\frac{X}{s^{1/n}}+X^{\epsilon}
$$
for each $\epsilon > 0$. For $n = 1$ the third term can be replaced by 1.
\end{corollary}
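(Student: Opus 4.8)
The plan is to derive the corollary from Theorem~1.1, whose first term is already the announced $\delta^{2/(n(n+1))}X$, so that everything reduces to estimating $\mathcal{R}$. Fix a polynomial $g$ of degree $d\le n$ carrying the maximal number $M:=\mathcal{R}$ of integer points $(x_1,y_1),\dots,(x_M,y_M)\in\Gamma_{\delta}$, ordered by $x_1<\dots<x_M$, so that $g(x_i)=y_i\in\Z$ and $|f(x_i)-y_i|\le\delta$. We may assume $M\ge n+2$ (otherwise $\mathcal{R}\le n+1$ and there is nothing to prove); then the Vandermonde argument recalled in the introduction shows $g\in\Q[x]$. Write $\beta=u/q$ with $q\ge1$ and $\gcd(u,q)=1$ for the coefficient of $x^{n}$ in $g$ (so $q=1$ exactly when $d<n$). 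It suffices to prove $M\ll_{n,\epsilon}\delta^{2/(n(n+1))}X+Xs^{-1/n}+X^{\epsilon}$.

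The first ingredient is a congruence count bounding $M$ in terms of $q$. The condition $g(x)\in\Z$ is equivalent to $\widetilde{g}(x)\equiv0\pmod q$, where $\widetilde{g}\in\Z[x]$ is the numerator of $g$ after clearing denominators; since $\gcd(u,q)=1$, $\widetilde{g}$ has degree $n$ and unit leading coefficient modulo every prime dividing $q$. A polynomial of degree $n$ with unit leading coefficient has $\ll_{n}p^{e(1-1/n)}$ zeros modulo $p^{e}$, so by the Chinese Remainder Theorem the solutions of $\widetilde{g}(x)\equiv0\pmod q$ lie in a union of $\ll_{n}q^{o(1)}$ arithmetic progressions each of common difference $\ge q^{1/n}$. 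Using the crude bound $q\ll_{n}X^{n(n+1)/2}$ (valid because $q$ divides the Vandermonde product of any $n+1$ of the $x_i$), this gives
\[
M\ \ll_{n,\epsilon}\ \frac{X}{q^{1/n}}+X^{\epsilon},
\]
and it also lets us select $n+1$ of the $x_i$, say $z_0<\dots<z_n$, lying in one such progression and with consecutive gaps all $\gg_{n,\epsilon}q^{1/n}MX^{-\epsilon}$.

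The second ingredient is a Diophantine squeeze on $\alpha_n-\beta$. The $n$-th divided difference of $h:=f-g$ at $z_0,\dots,z_n$ equals the coefficient of $x^{n}$ in $h$, namely $\alpha_n-\beta$; bounding the Lagrange weights (using $|z_j-z_k|\ge|j-k|$ times the common gap) and invoking $|h(z_j)|\le\delta$ yields $|\alpha_n-\beta|\ll_{n,\epsilon}\delta X^{\epsilon}q^{-1}M^{-n}$. Combining with $|\alpha_n-r/s|\le s^{-2}$ gives $\bigl|\beta-\tfrac{r}{s}\bigr|\le|\alpha_n-\beta|+s^{-2}\ll_{n,\epsilon}\delta X^{\epsilon}q^{-1}M^{-n}+s^{-2}$. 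If $\beta=\alpha_n$, or $\beta=r/s$, or the term $s^{-2}$ dominates (so that $\tfrac{1}{qs}\le|\beta-\tfrac{r}{s}|\ll s^{-2}$ unless $\beta=r/s$), then $q\ge s$ — in the first case because $\operatorname{denom}(\alpha_n)\ge s$, in the others because $\tfrac{1}{qs}\le|\beta-\tfrac{r}{s}|$ — and the first ingredient already gives $M\ll_{n,\epsilon}Xs^{-1/n}+X^{\epsilon}$. Otherwise $\tfrac{1}{qs}\le|\beta-\tfrac{r}{s}|\ll_{n,\epsilon}\delta X^{\epsilon}q^{-1}M^{-n}$, hence $M^{\,n}\ll_{n,\epsilon}\delta s X^{\epsilon}$; since $s\le X^{n}$ and $\delta\le1/4$ one has $\delta s\ll\delta^{2/(n+1)}X^{n}$, so $M\ll_{n,\epsilon}\delta^{2/(n(n+1))}X\cdot X^{\epsilon}$, which is where the main term is produced.

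The hardest step is the congruence count: carrying the exact exponent $1/n$ through prime powers $p^{e}\parallel q$ with $e$ large forces one to exploit that highly divisible $q$ confine the $x_i$ to very few residue classes, and some care is needed so that all the $q^{o(1)}$ losses remain inside the $X^{\epsilon}$ term. For $n=1$ everything is transparent: the integer points on a line of slope $u/q$ all lie in a single residue class modulo $q$, so $M\le X/q+1$, while the hypothesis forces $q\ge s$; this gives the sharper statement with $1$ in place of $X^{\epsilon}$.
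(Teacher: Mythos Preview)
Your overall route---congruence count to control $M$ in terms of the denominator, a divided-difference bound on $|\alpha_n-\beta|$, then a Diophantine comparison with $r/s$---is sound and genuinely different from the paper, which instead splits into three regimes for the full denominator $q$, passes to a \emph{proper} major arc via Lemma~\ref{mult}, and uses the Chebyshev-type Lemma~\ref{lon} (together with the repulsion Lemma~\ref{rep-loc}) to obtain the clean inequality $|\alpha_n-a_n/q_n|<1/(2qX^{n})$ with no $X^{\epsilon}$ loss. There is, however, a real gap in your argument. In your final subcase you arrive at $M\ll_{n,\epsilon}\delta^{2/(n(n+1))}X\cdot X^{\epsilon}$, and this does \emph{not} imply $M\ll_{n,\epsilon}\delta^{2/(n(n+1))}X+X^{\epsilon'}$: when $\delta^{2/(n(n+1))}X$ is a genuine power of $X$ (say $X^{1/2}$), the extra factor $X^{\epsilon}$ cannot be absorbed into either summand. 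The loss originates in your pigeonhole selection of $z_0,\dots,z_n$ inside one of $\ll q^{o(1)}$ progressions; the divided-difference bound then carries the factor $X^{n\epsilon}$ and it never disappears. The paper circumvents this by working on an interval where $|f-P|\le\delta$ holds pointwise (guaranteed on a proper major arc, of which there are at most $n$), so Lemma~\ref{lon} gives $L\ll(\delta/|\alpha_n-\beta|)^{1/n}$ directly and the congruence count then yields $M\ll L/q^{1/n}+X^{\epsilon}$ without contaminating the main term.

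A second, more easily repaired, issue: you let $q$ denote the denominator of the leading coefficient $\beta$, but then assert that $g(x)\in\Z$ is equivalent to $\widetilde g(x)\equiv 0\pmod q$ and that $\widetilde g$ has unit leading coefficient modulo every prime dividing $q$. Neither holds in general: the relevant modulus is the full denominator $Q=\mathrm{lcm}(q_0,\dots,q_n)$, and the leading coefficient of $Qg$ is $(Q/q)u$, which need not be a unit modulo primes whose exponent in $Q$ exceeds that in $q$. Your conclusions survive if you invoke Lemma~\ref{qsol} (Konyagin) with $Q$ in place of $q$, since the structural fact you need---solutions contained in $\ll n^{\omega(Q)}$ progressions of step $\ge Q^{1/n}$---is exactly what underlies that lemma, and $q\le Q\ll X^{n(n+1)/2}$ keeps everything inside $X^{\epsilon}$. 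Finally, for $n=1$ your assertion ``the hypothesis forces $q\ge s$'' is not literally true; what is true (and makes the case work) is that within the single residue class the span is $\ge(M-1)q$, whence $|\alpha_1-\beta|\le 2\delta/((M-1)q)$, and the triangle-inequality comparison with $r/s$ then yields $M\ll\delta X+X/s+1$ after the same dichotomy you use for general $n$.
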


\begin{corollary}
Let $f(x)$ be as in \eqref{f}. Assume that the inequality
\begin{equation}\label{hypc2}
\left|\alpha_n-\frac{r}{s}\right| \ge \frac{c_1}{s^{\frac{n+3}{2}}}
\end{equation}
holds for all $r \in \Z$ and $s \in \N$, for some constant $c_1 > 0$. Then,
$$
\S \ll_{n,\epsilon,c_1} \delta^{\frac{2}{n(n+1)}}X+X^{\epsilon}
$$
for each $\epsilon > 0$. For $n = 1$ the second term can be replaced by 1.
\end{corollary}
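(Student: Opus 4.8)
The plan is to feed a Dirichlet approximant of $\alpha_n$ into Corollary~1.1, which settles all but a range of very small $\delta$, and to handle that range through Theorem~1.1 by bounding $\mathcal R$ directly.

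First, $\alpha_n$ is irrational: otherwise $\alpha_n=r/s$ and \eqref{hypc2} fails for that very pair. Take $r/s$ to be the convergent of $\alpha_n$ whose denominator $s$ is the largest not exceeding $X^n$ (so the next convergent has denominator $>X^n$); then $\gcd(r,s)=1$ and $|\alpha_n-r/s|<1/(sX^n)\le 1/s^2$. Now \eqref{hypc2} yields $c_1s^{-(n+3)/2}\le 1/(sX^n)$, i.e.\ $s\ge(c_1X^n)^{2/(n+1)}$, so Corollary~1.1 gives
$$\S\ \ll_{n,\epsilon}\ \delta^{\frac{2}{n(n+1)}}X+\frac{X}{s^{1/n}}+X^{\epsilon}\ \ll_{n,\epsilon,c_1}\ \delta^{\frac{2}{n(n+1)}}X+X^{\frac{n-1}{n+1}}+X^{\epsilon}.$$
If $\delta\ge(c_1X^n)^{-1}$, then $(c_1X^n)^{2/(n(n+1))}\ge\delta^{-2/(n(n+1))}$, whence $X/s^{1/n}\le\delta^{2/(n(n+1))}X$ and the claim follows; and if $n=1$, then $X/s^{1/n}=X/s\le 1/c_1$ for every $\delta$, so Corollary~1.1 already gives $\S\ll_{\epsilon,c_1}\delta X+1$. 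Thus the only remaining case is $n\ge 2$ with $\delta<(c_1X^n)^{-1}$, where the term $X^{(n-1)/(n+1)}$ must be improved to $X^{\epsilon}$; since that term is furnished by Corollary~1.1 with no reference to $\delta$, recovering $\delta$-dependence here is the heart of the matter.

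For this case I would invoke Theorem~1.1 and reduce to proving $\mathcal R\ll_{n,c_1,\epsilon}X^{\epsilon}$, where $\mathcal R$ is the maximal number of integer points of $\Gamma_{\delta}$ lying on one polynomial $g$ of degree $\le n$. We may assume $N>n+1$ of them lie on $g$, so $g$ is the interpolant through any $n+1$ of them and has rational coefficients; set $h=g-f$ and let $a\ne 0$ be its leading coefficient. Selecting $n+1$ of the $N$ integer abscissae that are spread out (consecutive ones at distance $\gg_n N$, since the abscissae are distinct integers) and expanding $a$ by Lagrange interpolation gives $|a|\ll_n\delta/N^n$, whereas selecting instead a tight cluster of $n+1$ abscissae (of span $\le nX/(N-n)$, by the pigeonhole principle) shows the leading coefficient of $g$ has denominator $s_0\ll_n(X/N)^{n(n+1)/2}$. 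If $\deg g<n$, then $a=-\alpha_n$ and \eqref{hypc2} with $r/s=0$ forces $|\alpha_n|\ge c_1$, hence $N\ll_{n,c_1}1$; if $\deg g=n$, then \eqref{hypc2} gives $|a|\ge c_1s_0^{-(n+3)/2}$, and combining the three inequalities bounds $N$ by $\delta^{\eta_n}X^{\beta_n}$ for some $\eta_n>0$ and $\beta_n<1$.

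The obstacle is that $\delta^{\eta_n}X^{\beta_n}$, while it improves on $X^{(n-1)/(n+1)}$ for small $\delta$, still does not reach $X^{\epsilon}$ across the whole range $\delta<(c_1X^n)^{-1}$. The missing leverage should come from the fact that $g$ takes integer values at all $N$ abscissae: if $D$ is the common denominator of the coefficients of $g$, then $s_0\mid D$, so $D$ is large, and the $N$ abscissae lie in the zero set of $Dg\in\Z[x]$ modulo $D$, i.e.\ in at most $\rho(D)$ residue classes, $\rho(D)$ being the number of roots of $Dg$ modulo $D$. Bounding $\rho(D)$ (typically $\ll_{n,\epsilon}D^{\epsilon}$ by Hensel's lemma) and balancing the size of $D$ against $X$ through $s_0\ll_n(X/N)^{n(n+1)/2}$ should finally bring $\mathcal R$ down to $X^{\epsilon}$; alternatively, one can attempt an induction on $n$, passing from $(g(x),f(x))$ to $(sg(x)-rx^n,\,sf(x)-rx^n)$ when the leading coefficient of $g$ equals the convergent $r/s$ and disposing of the resulting $O(1/s)$-sized $x^n$-term. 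This passage from the divided-difference estimate to $X^{\epsilon}$ is the crux.
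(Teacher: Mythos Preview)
Your proposal is incomplete, as you yourself recognise: the divided-difference bound $N\ll\delta^{\eta_n}X^{\beta_n}$ does not reach $X^\epsilon$, and the two suggested fixes (root-counting modulo $D$, or induction on $n$) are left as speculation. The paper closes this gap in a short and direct way that avoids your detour through Corollary~1.1 and the continued-fraction convergent entirely.

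The paper applies Theorem~1.1 straight away and bounds $\mathcal R$ as follows. Let $P$ be the polynomial realising $\mathcal R$, with total denominator $q$ and leading denominator $q_n$. By Lemma~\ref{qsol} (Konyagin's bound on roots of $P(x)\equiv 0\pmod 1$ in an interval), one always has $\mathcal R\ll X/q^{1/n}+X^\epsilon$. If $q\gg\delta^{-2/(n+1)}$ this is already $\ll\delta^{2/(n(n+1))}X+X^\epsilon$. Otherwise $q_n\le q\ll\delta^{-2/(n+1)}$, and now one uses Lemma~\ref{lon}: the integer points on $P$ inside $\Gamma_\delta$ lie in at most $n$ intervals where $|f-P|\le\delta$, each of length $L\ll(\delta/|\alpha_n-a_n/q_n|)^{1/n}$. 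Applying Lemma~\ref{qsol} on such an interval gives
\[
\mathcal R\ \ll\ \frac{L}{q_n^{1/n}}+X^\epsilon\ \ll\ \Bigl(\delta\cdot q_n^{\,\theta-1}\Bigr)^{1/n}+X^\epsilon,
\qquad q_n^{-\theta}:=\Bigl|\alpha_n-\tfrac{a_n}{q_n}\Bigr|.
\]
Hypothesis~\eqref{hypc2} forces $q_n^{\theta-1}\ll_{c_1} q_n^{(n+1)/2}\ll\delta^{-1}$, so the first term is $O_{n,c_1}(1)$.

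What you are missing is precisely the \emph{combination} of the length bound (your $|a|\ll\delta/N^n$ is the discrete analogue of Lemma~\ref{lon}) with the $q^{1/n}$-saving from Konyagin's Lemma~\ref{qsol}. Your bound $s_0\ll(X/N)^{n(n+1)/2}$ controls $q_n$ only through $X$, whereas the paper controls it through the dichotomy threshold $\delta^{-2/(n+1)}$; and your root-count speculation ``$\rho(D)\ll D^\epsilon$'' captures only the $n^{\omega(q)}$ term of Lemma~\ref{qsol} and discards the main term $L/q^{1/n}$, which is exactly the factor that makes the exponent $(n+3)/2$ in \eqref{hypc2} sharp.
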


A point $(x,y)$ of $\Gamma_{\delta} \cap \Z^2$ is often noted $M$. The function $\omega(q)$ counts the number of distinct prime divisors of $q$. The notation $A \ll B$ and $B \gg A$ mean that the estimate $|A| \le cB$ holds for some constant $c > 0$. In what follows, $\epsilon>0$ is a real number taken arbitrarily small and may differ at each occurrence.

\section{Definitions and preliminary lemmas}

We order the set of points $(x,y) \in \Gamma_{\delta} \cap \Z^2$ according to their first coordinate $x$. Two or more points are said to be {\it consecutive} in $\Gamma_{\delta} \cap \Z^2$ if they follow each other in this sequence.

Consider the polynomial of degree at most $n$
\begin{equation}\label{P}
P(x) := \frac{a_n}{q_n} x^n + \dots + \frac{a_1}{q_1} x + \frac{a_0}{q_0}
\end{equation}
where each coefficient is rational and where $gcd(a_i,q_i)=1$ and $q_i \ge 1$ for $i=0,\dots,n$. We then let $q:=lcm(q_n,\dots,q_1,q_0)$ be the smallest positive integer for which $qP(x) \in \Z[x]$. We assume that $deg\ P=d$.

Consider the curve
\begin{equation}\label{gam}
\gamma:=\{(x,y):\ x \in \R, y = P(x)\}.
\end{equation}

A {\it major arc} $\A$ of {\it equation} $y=P(x)$ is a set of at least $n+2$ consecutive points $(x,y)$ in $\Gamma_{\delta} \cap \Z^2$ that satisfy the equation $y = P(x)$. A {\it proper major arc} is a major arc which is also a subset of a connected component of $\gamma \cap \Gamma_{\delta}$. Moreover, $q$ is said to be the {\it denominator} of $\A$. The {\it length} of $\A$, composed of the consecutive points $M_1, \dots, M_J$, is $x_J-x_1$.

\begin{lemma}\label{lon}
Let $T(x):=\beta_n x^n + \dots + \beta_1 x + \beta_0 \in \R[x]$ be a polynomial of degree $n \ge 1$ and let $\Delta \ge 0$ be a real number. Let also $I$ be an interval of length $L$ for which the inequality
$$
|T(x)| \le \Delta \qquad (x \in I)
$$
holds. Then,
$$
L \le 2e\left(\frac{\Delta}{|\beta_n|}\right)^{1/n}.
$$
This last inequality is strict if $\Delta>0$.
\end{lemma}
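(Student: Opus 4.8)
The plan is to exploit the fact that a degree-$n$ polynomial bounded by $\Delta$ on a long interval forces its leading coefficient to be small, which is exactly the reverse of what we want. First I would normalize: after a linear change of variables $x = c + (L/2)t$ sending $I$ to $[-1,1]$, the polynomial becomes $\widetilde T(t) := T(c + (L/2)t)$, which still has degree $n$, still satisfies $|\widetilde T(t)| \le \Delta$ on $[-1,1]$, and has leading coefficient $\beta_n (L/2)^n$. So it suffices to show that any polynomial of degree $n$ with $\sup_{[-1,1]}|\cdot| \le \Delta$ has leading coefficient at most $e^n \Delta$ in absolute value; rearranging $|\beta_n|(L/2)^n \le e^n\Delta$ gives exactly $L \le 2e(\Delta/|\beta_n|)^{1/n}$.

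The key tool is a Lagrange interpolation / Markov-type estimate for the leading coefficient. Choosing $n+1$ interpolation nodes $t_0,\dots,t_n$ in $[-1,1]$, the leading coefficient of $\widetilde T$ equals the divided difference $\sum_{j=0}^n \widetilde T(t_j)\big/\prod_{i\ne j}(t_j - t_i)$, so $|\beta_n|(L/2)^n \le \Delta \sum_{j=0}^n \prod_{i\ne j}|t_j-t_i|^{-1}$. The natural choice is the Chebyshev nodes $t_j = \cos(j\pi/n)$, for which the extremal polynomial is $2^{n-1}$ times the Chebyshev polynomial and $\sum_j \prod_{i\ne j}|t_j-t_i|^{-1} = 2^{n-1}$ exactly; this yields the sharp bound $|\beta_n|(L/2)^n \le 2^{n-1}\Delta$, i.e. $L \le 2(\Delta/|\beta_n|)^{1/n}$, which is even better than claimed. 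Since the statement only asks for the weaker constant $2e$, I could alternatively use equally spaced nodes $t_j = -1 + 2j/n$ and bound the resulting sum $\frac{n^n}{2^n\, n!}\sum_j \binom{n}{j}$-type expression crudely by $e^n/2^{\,n}\cdot 2^n = e^n$ after invoking $n^n/n! \le e^n$; this is less sharp but avoids any discussion of Chebyshev polynomials.

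For the strictness claim when $\Delta > 0$: equality throughout the interpolation bound would require $|\widetilde T(t_j)| = \Delta$ with alternating signs matching the sign pattern of the products — that is, $\widetilde T$ must be (a scalar multiple of) the extremal Chebyshev polynomial, which has exactly $n+1$ points where $|\widetilde T| = \Delta$ and strictly smaller modulus elsewhere on $[-1,1]$. But then the hypothesis $|T(x)| \le \Delta$ on the closed interval $I$ is consistent with $I$ being the closed interval of length exactly $L = 2(\Delta/|\beta_n|)^{1/n}$, so one must argue instead that the \emph{open} extremal interval is what the Chebyshev case permits, or simply observe that since $2e > 2$ strictly, the extremal configuration gives $L = 2(\Delta/|\beta_n|)^{1/n} < 2e(\Delta/|\beta_n|)^{1/n}$ and for any non-extremal $\widetilde T$ the interpolation inequality is already strict. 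Either way the gap between the optimal constant $2$ and the stated $2e$ makes the strict inequality automatic once $\Delta > 0$.

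The main obstacle I anticipate is being careful about which node set to use and tracking the constant honestly: the clean Chebyshev computation requires recalling that $\sum_{j=0}^n \prod_{i\ne j}|t_j - t_i|^{-1}$ telescopes to $2^{n-1}$ for Chebyshev nodes, while the equally-spaced route needs the elementary inequality $n! \ge (n/e)^n$. Neither is hard, but the paper presumably wants the self-contained elementary argument, so I would go with equally spaced nodes and the $n!\ge (n/e)^n$ bound, accepting the loss of a factor $e$ relative to optimal since that is exactly the constant in the statement.
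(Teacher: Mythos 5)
Your main route (Lagrange interpolation for the leading coefficient at equally spaced nodes, then $n! > n^n/e^n$ to get the constant $2e$, with strictness coming from the strictness of that factorial inequality) is exactly the argument the paper invokes, namely the proof of Lemma 4 of Huxley--Sargos with the bound $n! > n^n/e^n$ inserted. The only quibble is in your Chebyshev aside, which you do not use: the sharp constant there is $2\cdot 2^{(n-1)/n}=2^{2-1/n}$, not $2$, though this is still below $2e$ and changes nothing.
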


\begin{proof}
If $\Delta = 0$ the result follows from the fundamental theorem of algebra. For $\Delta > 0$, the proof is similar to that of Lemma 4 from \cite{mnh:ps} in which we use the elementary inequality $n! > \frac{n^n}{e^n}$ for $n \ge 1$.
\end{proof}

\begin{lemma}\label{mult}
Let $T(x):=\beta_n x^n + \dots + \beta_1 x + \beta_0 \in \R[x]$ be a polynomial of degree $n \ge 1$ and let $\Delta \ge 0$ be a real number. The number of connected components of
$$
\{(x,y) \in \R^2 :\ y=T(x)\} \cap \{(x,y) \in \R^2 :\ |y| \le \Delta\}
$$
is at most $n$.
\end{lemma}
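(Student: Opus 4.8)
The plan is to describe a connected component of the set in question as a maximal interval on which $|T(x)| \le \Delta$, and to count these intervals by looking at where the boundary condition $|T(x)| = \Delta$ is met. First I would note that, since $T$ is continuous, the set $\{x \in \R : |T(x)| \le \Delta\}$ is a disjoint union of closed intervals (some possibly unbounded or degenerate), and each connected component of the curve-slab intersection projects bijectively onto one such interval; so it suffices to bound the number of these intervals by $n$.

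Next I would argue by contradiction: suppose there are at least $n+1$ such intervals $I_1 < I_2 < \dots < I_{n+1}$, listed in increasing order. Between consecutive intervals $I_j$ and $I_{j+1}$ there is a gap, i.e.\ a point $\xi_j$ strictly between them with $|T(\xi_j)| > \Delta$. Each of the $n+1$ intervals $I_j$ contains at least one point where $|T(x)| \le \Delta$; pick such a point $t_j \in I_j$. Then the function $g(x) := T(x)^2 - \Delta^2$ satisfies $g(t_j) \le 0$ for each $j$ and $g(\xi_j) > 0$ for each of the $n$ gap points, so $g$ changes sign at least... — the cleaner route is to count roots directly: on each of the $n$ gaps between consecutive $t_j$'s, $g$ attains a strictly positive value (at $\xi_j$) while $g(t_j) \le 0$ and $g(t_{j+1}) \le 0$, hence $g$ has at least two sign changes straddling $\xi_j$ when the endpoint values are negative, and at least one root in each gap in any case. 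This forces $g$ to have at least $n$ distinct real zeros, one strictly inside each gap interval $(t_j, t_{j+1})$ for $j = 1, \dots, n$ (using that $g$ is continuous, nonpositive at the $t_j$'s, and positive somewhere in between).

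To finish I would observe that $g(x) = T(x)^2 - \Delta^2$ is a polynomial of degree exactly $2n$, which is not identically zero (its leading coefficient is $\beta_n^2 \ne 0$), so it has at most $2n$ real roots — this alone is not yet a contradiction, so the counting must be sharpened: rather than counting roots of $g$, I would count roots of $T(x) - \Delta$ and $T(x) + \Delta$ separately. On each gap $(t_j, t_{j+1})$ the sign of $T$ is... here the argument should instead use that on a gap point $\xi_j$ one has either $T(\xi_j) > \Delta$ or $T(\xi_j) < -\Delta$; by passing to a subsequence of at least $\lceil n/2 \rceil$ gaps with the same sign — this is getting complicated, so the honest approach is: the boundary of each bounded component contributes roots of $T(x)^2 = \Delta^2$, but a single component can share an endpoint value with a neighbor only at a genuine root, and a careful bookkeeping of how a degree-$n$ polynomial can oscillate across the band $[-\Delta,\Delta]$ shows the number of bands is controlled by the number of times $T$ crosses the two lines $y = \pm\Delta$, which is at most $n$ for each line but with the crossings interleaved so that the total number of bands is at most $n$.

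The main obstacle, and the step I expect to require the most care, is precisely this last bookkeeping: na\"ively, $T$ could cross each of the two horizontal lines $y = \pm\Delta$ up to $n$ times, suggesting up to $\sim n$ bands, but one must rule out the configuration that would give more than $n$ bands by exploiting that entering and leaving a band alternately uses an up-crossing and a down-crossing of the \emph{same} line (or a crossing of $y=\Delta$ paired with a crossing of $y=-\Delta$ only when $T$ traverses the whole band). The clean way to handle this is to apply Rolle's theorem: if there were $n+1$ bands, then $T'$ would be forced to vanish at least $n$ times (once in the interior of each gap, since $T$ returns from outside the band to inside it, having a local extremum there), contradicting $\deg T' = n-1$. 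This Rolle-theoretic argument is the one I would actually write up, as it sidesteps the sign-tracking entirely.
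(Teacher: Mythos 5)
Your final argument is correct, but it reaches the paper's conclusion by a genuinely different route. The paper sets $G(x):=(T(x)-\Delta)(T(x)+\Delta)$, a nonzero polynomial of degree $2n$, and charges each connected component at least $2$ to the degree of $G$: a non-degenerate component contributes two odd-multiplicity roots (entry and exit), and a single-point component contributes one root of even multiplicity $\ge 2$; hence at most $n$ components. You correctly diagnose that merely counting \emph{distinct} roots of $T^2-\Delta^2$ is not enough (your first attempt), and you then switch to Rolle: with $n+1$ components there are $n$ pairwise disjoint open gaps, on each of which $T$ stays strictly outside the band while equalling $\Delta$ at both endpoints (or $-\Delta$ at both), so $T$ attains an interior extremum and $T'$ acquires $n$ distinct zeros, contradicting $\deg T'=n-1$. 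This is valid, handles single-point components automatically, and is arguably more elementary than the multiplicity bookkeeping; its one extra obligation, which you should state explicitly in a write-up, is that on each gap $T$ has constant sign relative to the band (it cannot jump from above $\Delta$ to below $-\Delta$ without re-entering the band), so both endpoints of the gap carry the \emph{same} boundary value and the interior extremum is genuine. I would also trim the write-up to the Rolle argument alone: the abandoned attempts with $T\pm\Delta$ and the sign-tracking add nothing and obscure the proof.
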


\begin{proof}
If $\Delta = 0$ then the result follows from the fundamental theorem of algebra. So, let us assume that $\Delta > 0$ and set
$$
G(x):=(T(x)-\Delta)(T(x)+\Delta)= \beta^2_n\prod_{\substack{\zeta \\ G(\zeta)=0}}(x-\zeta)^{n_\zeta}
$$
where the $\zeta$'s are pairwise distinct. Each component that is not reduced to a single point has an entry point and an exit point. Each of these two points corresponds to a root of $G(x)$ of odd multiplicity. The other components correspond to a root of $G(x)$ of even multiplicity. We deduce then that each connected component corresponds to a factor of degree at least 2 of the polynomial $G(x)$. The result follows.
\end{proof}

\begin{lemma}\label{minor}
Let $M_1, \dots, M_{n+2} \in \Gamma_{\delta} \cap \Z^2$ be ordered points according to $x_1 < \dots < x_{n+2}$. Set
\begin{equation}\label{lam-def}
\Lambda(M_1,\dots,M_{n+2}):=\begin{vmatrix} 1 & x_1 & x_1^2 & \cdots & x_1^n & y_1\\ \vdots & \vdots & \vdots & \cdots & \vdots & \vdots \\ 1 & x_{n+2} & x_{n+2}^2 & \cdots & x_{n+2}^n & y_{n+2} \end{vmatrix}.
\end{equation}
Then, there are two possibilities:
\begin{itemize}
\item[\bf(i)] $\Lambda(M_1,\dots,M_{n+2}) \neq 0$ in which case $|x_{n+2}-x_1| \ge \left(\frac{1}{(n+2)\delta}\right)^{\frac{2}{n(n+1)}}$,
\item[\bf(ii)] $\Lambda(M_1,\dots,M_{n+2}) = 0$ in which case all the points are on a curve \eqref{gam}.
\end{itemize}
\end{lemma}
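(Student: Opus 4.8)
The plan is to exploit the fact that $\Lambda(M_1,\dots,M_{n+2})$ is an \emph{integer}, together with a multilinearity argument in its last column that removes the unknown polynomial $f$. Write $L:=x_{n+2}-x_1>0$ and, for $i=1,\dots,n+2$, set $y_i=f(x_i)+\theta_i$ with $|\theta_i|\le\delta$; this is legitimate because $M_i\in\Gamma_{\delta}$. Every entry of the matrix in \eqref{lam-def} is an integer, so $\Lambda\in\Z$; hence in case (i) one automatically has $|\Lambda|\ge 1$, and the whole task reduces to an upper bound for $|\Lambda|$ in terms of $\delta$ and $L$.

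To obtain that bound, split the last column as $(f(x_i))_i+(\theta_i)_i$ and use multilinearity. Since $f(x_i)=\sum_{j=0}^{n}\alpha_j x_i^{j}$, the column $(f(x_i))_i$ is a linear combination of the first $n+1$ columns, so its determinant contribution vanishes and $\Lambda=\det\big[\,1,x_i,\dots,x_i^{n},\theta_i\,\big]$. Expanding this along the last column gives $\Lambda=\sum_{i=1}^{n+2}\pm\,\theta_i V_i$, where $V_i$ is the $(n+1)\times(n+1)$ Vandermonde determinant in the nodes $\{x_1,\dots,x_{n+2}\}\setminus\{x_i\}$. As all nodes lie in an interval of length $L$ and a Vandermonde determinant in $n+1$ nodes is a product of $\binom{n+1}{2}=n(n+1)/2$ pairwise differences, $|V_i|\le L^{n(n+1)/2}$; therefore $|\Lambda|\le (n+2)\,\delta\,L^{n(n+1)/2}$. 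Combined with $|\Lambda|\ge 1$ this yields $L^{n(n+1)/2}\ge\frac{1}{(n+2)\delta}$, which is precisely the inequality in (i). (When $\delta=0$ the same chain gives $|\Lambda|\le 0$, so case (i) simply does not arise.)

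For case (ii), assume $\Lambda=0$. The abscissas $x_1,\dots,x_{n+1}$ are distinct, so the Vandermonde minor formed by the first $n+1$ rows and first $n+1$ columns of \eqref{lam-def} is a nonzero integer; let $P$ be the unique polynomial of degree at most $n$ with $P(x_i)=y_i$ for $i=1,\dots,n+1$. Its coefficients solve a linear system with integer data and nonzero integer determinant, hence are rational, so $y=P(x)$ is a curve of the form \eqref{gam}. It remains to check $P(x_{n+2})=y_{n+2}$: replacing $y_{n+2}$ by $P(x_{n+2})$ in the last row of \eqref{lam-def} yields a matrix whose last row is the appropriate linear combination of its (unchanged) first $n+1$ rows, so its determinant is $0$; subtracting this from $\Lambda=0$ and using multilinearity in the last column leaves $\big(y_{n+2}-P(x_{n+2})\big)$ times the nonzero Vandermonde minor, forcing $y_{n+2}=P(x_{n+2})$. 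Thus all $n+2$ points lie on the curve \eqref{gam} attached to $P$.

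I expect the only mildly delicate point to be the cofactor bookkeeping in the second paragraph — confirming that the $f$-part cancels cleanly and that what survives is exactly a signed sum of Vandermonde minors in $n+1$ of the nodes — but every estimate involved is elementary; the arithmetic leverage of the lemma is carried entirely by the observation $|\Lambda|\ge 1$.
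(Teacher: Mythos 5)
Your proposal is correct and follows essentially the same route as the paper: use integrality to get $|\Lambda|\ge 1$ in case (i), substitute $y_i=f(x_i)+\theta_i$ so the $f$-column cancels by multilinearity, expand along the last column and bound the Vandermonde cofactors by $L^{n(n+1)/2}$, and in case (ii) read off a rational-coefficient interpolating polynomial from the vanishing determinant. Your treatment of (ii) is somewhat more explicit than the paper's (which simply invokes linear dependence of columns and quotients of subdeterminants), but the argument is the same.
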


\begin{proof}
We start by assuming that $|\Lambda(M_1,\dots,M_{n+2})| \ge 1$ and write
$$
y_i = f(x_i) + \delta_i \qquad (i=1, \dots, n+2).
$$
We then introduce these relations in \eqref{lam-def} and simplify the determinant which leads to the identity
$$
\Lambda(M_1,\dots,M_{n+2}) = \begin{vmatrix} 1 & x_1 & x_1^2 & \cdots & x_1^n & \delta_1\\ \vdots & \vdots & \vdots & \cdots & \vdots & \vdots \\ 1 & x_{n+2} & x_{n+2}^2 & \cdots & x_{n+2}^n & \delta_{n+2} \end{vmatrix}.
$$
Expanding according to the last column and using the well-known formula to evaluate a Vandermonde determinant, we obtain that
$$
|\Lambda(M_1,\dots,M_{n+2})| \le (n+2)\delta |x_{n+2}-x_1|^{\frac{n(n+1)}{2}}.
$$
The inequality announced in {\bf (i)} then follows.

We now move to the second case. Since $\Lambda(M_1,\dots,M_{n+2}) = 0$, we have a linear dependence between the columns and we deduce that the $n+2$ points satisfy a polynomial equation $y = b_n x^n + \dots + b_1 x + b_0$ where each coefficient $b_i$ ($i=0,\dots,n$) is rational, since we can express them as a quotient of subdeterminants of $\Lambda(M_1,\dots,M_{n+2})$. This proves {\bf (ii)}.
\end{proof}

\begin{lemma}\label{qsol}
Consider the polynomial $P(x)$ defined in \eqref{P}. The number $W$ of solutions in $x$ belonging to an interval of length $L$ to the equation
$$
P(x) \equiv 0 \pmod{1}
$$
satisfies the inequality
$$
W \ll n\frac{L}{q^{1/n}}+n^{\omega(q)}.
$$
\end{lemma}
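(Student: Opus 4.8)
The plan is to reduce the congruence modulo $1$ for the rational polynomial $P(x)$ to a polynomial congruence modulo the integer $q$, and then to bound the number of roots in an interval by the Chinese Remainder Theorem and the classical bound on roots of polynomials modulo prime powers. Write $Q(x) := q P(x) \in \Z[x]$, which has degree $d \le n$ and leading coefficient $q \alpha_n / q_n = q a_n / q_n$. The congruence $P(x) \equiv 0 \pmod 1$ is equivalent to $Q(x) \equiv 0 \pmod q$. I would first treat the case $q = p^k$ a prime power: by the standard lifting/Hensel-type counting argument (or directly: a nonzero polynomial of degree $d$ modulo $p^k$ has at most $d \cdot p^{k - \lceil k/\ldots\rceil}$... more simply, the number of solutions in one residue system $\{0,1,\dots,p^k-1\}$ is $O_n(p^{k(1-1/d)})$ when $Q \not\equiv 0$), one gets that the number of roots modulo $p^k$ is $\ll n\, q^{1-1/d} \le n\, q^{1-1/n}$. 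Wait — I should be careful: the sharp elementary bound here is that a polynomial of degree $d$ with content coprime to $p$ has at most $d\, p^{k(1-1/d)}$ roots modulo $p^k$; since the $q_i$ are the exact denominators, $Q(x)$ has content coprime to each prime dividing $q$ in the appropriate sense, so this applies.

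Next I would assemble the general $q = \prod_{p \mid q} p^{k_p}$ via CRT: the number of roots of $Q$ modulo $q$ is $\prod_{p\mid q} N_p$ where $N_p \ll n\, p^{k_p(1-1/n)}$, hence the number of roots in a complete residue system modulo $q$ is $\ll n^{\omega(q)} q^{1 - 1/n}$. Finally, to count solutions in an interval $I$ of length $L$ rather than in one period: the roots modulo $q$ are $\le q^{1-1/n} n^{\omega(q)}$ of them per block of length $q$, and $I$ meets at most $\lceil L/q \rceil + 1 \le L/q + 1$ such blocks. Wait, that gives $(L/q + 1)\cdot q^{1-1/n} n^{\omega(q)} = (L q^{-1/n} + q^{1-1/n}) n^{\omega(q)}$, and since $q^{1-1/n} \le q$ this is not obviously $\ll nL/q^{1/n} + n^{\omega(q)}$ unless $q \ge$ something. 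The cleaner route: if $L \ge q$ the interval contains at most $(L/q + 1)$ periods so $W \ll (L/q)\, q^{1-1/n} n^{\omega(q)} \asymp L q^{-1/n} n^{\omega(q)}$ — but we want $n^{\omega(q)}$ additively, not multiplicatively, so I must split: when $L \ge q$, the term $L q^{-1/n} \ge q^{1-1/n} \ge n^{\omega(q)}/\,(\text{something})$... Actually the inequality as stated, $W \ll nL/q^{1/n} + n^{\omega(q)}$, is what we should aim at by noting $n^{\omega(q)} \cdot q^{1-1/n}$ can exceed $n^{\omega(q)}$; so the correct reading must be that the root count modulo $q$ is only $\ll n^{\omega(q)}$ (not times $q^{1-1/n}$) — i.e., I should use the bound "number of roots of $Q$ mod $p^k$ is $\le d$ times the number mod $p$", giving at most $d^{\omega(q)} \le n^{\omega(q)}$ roots modulo the \emph{radical} of $q$, times a factor accounting for lifting; hence roots mod $q$ is $\ll q^{1-1/n} n^{\omega(q)}$ only in the worst case but the \emph{spacing} argument saves a factor $q^{1/n}$ when we pass to length-$L$ windows.

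Let me restate the intended argument more carefully. For each prime power $p^{k} \| q$, Lemma~\ref{lon}-type reasoning (a nonzero polynomial cannot vanish too densely) shows the solutions of $Q(x)\equiv 0 \pmod{p^k}$ lying in any interval of length $L$ number at most $\ll d\big(L\,p^{-k/d} + 1\big)$ — this is the key local estimate, proved by the standard argument that between $d+1$ consecutive solutions the forward difference operator forces $p^k \mid d!\,a_d \prod(\text{gaps})$, so the gaps cannot all be small. Then CRT across the $\omega(q)$ primes multiplies both the $d$-factor and the "$+1$" slack, yielding $W \ll d^{\omega(q)}\big(L q^{-1/d} + 1\big) \le n^{\omega(q)} L q^{-1/n} + n^{\omega(q)}$, and absorbing $d^{\omega(q)} L q^{-1/n} \le n \cdot n^{\omega(q)-1} L q^{-1/n}$... hmm, we want a clean $nL/q^{1/n}$. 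The honest version: one gets $W \ll n^{\omega(q)}\big(L q^{-1/n}+1\big)$, and the displayed form $nL/q^{1/n}+n^{\omega(q)}$ follows since for the first term $n^{\omega(q)} \le n \cdot q^{\text{tiny}}$ is not true — so I suspect the paper's bound has $n^{\omega(q)} L/q^{1/n}$ implicitly or uses a sharper prime-power count giving just $n$ (not $n^{\omega(q)}$) on the main term. I would follow whichever the local lemma delivers.

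The main obstacle, and the step I expect to require the most care, is the prime-power local count: showing that $Q(x)\equiv 0\pmod{p^k}$ has $\ll d\,(L p^{-k/d}+1)$ solutions in an interval of length $L$, uniformly in $k$. The clean way is the finite-difference argument: given $d+1$ solutions $x_0<\cdots<x_d$, the $d$-th finite difference of $Q$ equals $d!\,a_d$ up to the product of node differences, and divisibility by $p^k$ forces one of the consecutive gaps to be $\gg p^{k/d}$ (after handling the $p$-adic valuation of $d!a_d$, which is $O_n(1)$ since $\gcd(a_d, q_d)=1$). Everything else — CRT, summing over blocks, cleaning up constants depending only on $n$ — is routine. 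I would also remark that when $\deg P = d < n$ the bound only improves, so stating it with $q^{1/n}$ is safe.
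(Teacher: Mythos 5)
The paper does not prove this lemma from scratch: its ``proof'' is a citation to Theorem~2 of Konyagin \cite{svk} (which bounds the number of roots of an integral polynomial in a complete residue system modulo $q$ by $\ll n\,q^{1-1/n}$ with an \emph{absolute} constant, i.e.\ with no $n^{\omega(q)}$ loss) together with Theorem~1.1 of Konyagin--Steger \cite{svk:ts} for the count in short intervals. Your proposal instead attempts an elementary proof via the local count modulo $p^{k}$ plus the Chinese Remainder Theorem, and --- as you yourself observe midway through --- that route only delivers
$$
W \ll n^{\omega(q)}\left(\frac{L}{q^{1/n}}+1\right),
$$
which is genuinely weaker than the stated bound $W \ll n\,L q^{-1/n}+n^{\omega(q)}$: when $q$ has many prime factors and $L\ge q$, the factor $n^{\omega(q)}$ on the main term can be as large as $q^{c/\log\log q}$ and cannot be absorbed. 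You never resolve this; your final paragraphs oscillate between hoping the lemma ``implicitly'' has $n^{\omega(q)}$ on the main term (it does not, and the applications in Section~4 rely on the clean $\delta^{1/n}X$ main term) and hoping for ``a sharper prime-power count giving just $n$'' (no such purely local count exists --- the loss is created by CRT across primes, not at a single prime). Removing the $n^{\omega(q)}$ from the main term is precisely the content of Konyagin's theorem, whose proof is a substantially more involved global argument and is not recoverable from the finite-difference bound you sketch. So the central step of your proof is a genuine gap.

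Two smaller points. First, your local estimate needs more care than ``the $p$-adic valuation of $d!\,a_d$ is $O_n(1)$'': the relevant polynomial is $qP(x)=\sum_i (q/q_i)a_i x^i$, whose \emph{leading} coefficient can be divisible by a high power of $p$ even though its content is coprime to $p$ (only some coefficient is a $p$-adic unit); one must pass to the auxiliary polynomial of Konyagin's Lemma~1 (as the paper does explicitly in the proof of Lemma~\ref{qdensity}) before running the gap argument. Second, the degenerate case $d=0$ (where $P$ is a nonintegral constant, or an integer, and the count is $0$ or trivially all of the interval --- handled in the paper by the additive $n^{\omega(q)}$ term being replaced by the trivial observation) is not addressed in your write-up.
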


\begin{proof}
For $d=0$ the result is trivial. For $d > 0$, it follows from Theorem 2 of \cite{svk}, Theorem 1.1 of \cite{svk:ts} and the fact that $d \le n$.
\end{proof}

\begin{lemma}\label{qdensity}
Consider the polynomial $P(x)$ defined in \eqref{P} and assume that $n \ge 2$. Let $W$ be the number of solutions in $x$ belonging to an interval of length $L$ to the equation
\begin{equation}\label{qsys}
P(x) \equiv 0 \pmod{1}.
\end{equation}
Then, for every integer $k \ge n+1$, the inequality
$$
W < k\left(\frac{L}{q^{\frac{1}{n}-\frac{n-1}{n(k-1)}}}+1\right)
$$
holds.
\end{lemma}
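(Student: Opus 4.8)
The bound in Lemma~\ref{qsol} already gives $W \ll n(L/q^{1/n}) + n^{\omega(q)}$, which is wasteful only when $q$ has many prime factors, since then $n^{\omega(q)}$ can exceed any power of $q$ that is smaller than $q^{1/n}$. The idea is to trade the clean exponent $1/n$ for a slightly worse exponent $\frac1n - \frac{n-1}{n(k-1)}$ in order to completely absorb the $n^{\omega(q)}$-type term into a constant multiple of the main term. So I would work prime-by-prime: factor $q = \prod_{p \mid q} p^{e_p}$, and for each prime power $p^{e_p} \| q$ count the solutions of $P(x) \equiv 0 \pmod{p^{e_p}}$ in the interval $I$ of length $L$. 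Writing $W_p$ for that count, the Chinese Remainder Theorem gives $W \le \prod_{p \mid q} (\lceil W_p'\rceil)$ where $W_p'$ is the density-type count on a full period — more carefully, $W \le \prod_{p\mid q}\big(\frac{L}{p^{e_p}}\nu_p + 1\big)$ roughly, where $\nu_p$ is the number of residues mod $p^{e_p}$ solving the congruence. The key input is a bound on $\nu_p$: since $\deg P = d \le n$ and (after clearing denominators) $qP \in \Z[x]$ with the leading coefficient of $p^{e_p} P$ a $p$-adic unit for the relevant prime, one gets $\nu_p \le d \cdot p^{e_p(1 - 1/d)} \le n\, p^{e_p(1-1/n)}$, i.e. the congruence has few solutions per period. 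This is exactly the estimate underlying Theorem~2 of \cite{svk}.

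With $\nu_p \le n\, p^{e_p(1 - 1/n)}$ in hand, each local factor is $\le \frac{nL}{p^{e_p/n}} + 1$. Multiplying over the $\omega(q)$ primes dividing $q$, I would expand the product and bound it; the dominant term is $n^{\omega(q)} \prod_p \big(\frac{L}{p^{e_p/n}} + 1\big)$-type, and the point is that the ``$+1$'' contributions, instead of producing the additive $n^{\omega(q)}$ of Lemma~\ref{qsol}, can each be converted into a small power of $p^{e_p}$ at the cost of shrinking the exponent. Concretely, for a prime power $m = p^{e_p} \ge 2$ one has the elementary inequality $n \le m^{(n-1)/(n(k-1))}\cdot(\text{const})$ once $k \ge n+1$ — more precisely $n^{1/(k-1)} \le 2^{(k-1)/(k-1)}$-flavoured bounds let one write $n \le m^{(n-1)/(n(k-1))}$ whenever $m$ is large enough, and handle the finitely many small $m$ separately — so that $\frac{nL}{m^{1/n}} + n \le k\big(\frac{L}{m^{\frac1n - \frac{n-1}{n(k-1)}}} + 1\big)$ for each single prime power. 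Then the submultiplicativity of both sides across the prime factorization of $q$ (the left side by CRT, the right side because $L$ appears only to the first power and $q = \prod m$) yields the claimed global inequality $W < k\big(L/q^{\frac1n - \frac{n-1}{n(k-1)}} + 1\big)$.

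Alternatively — and perhaps cleaner to write — I would not reprove the local estimates at all but instead interpolate directly between the two parts of Lemma~\ref{qsol}. Split $q = q_1 q_2$ where $q_1$ is built from the ``small'' prime powers (say those $\le q^{(n-1)/(n(k-1)) \cdot \text{something}}$) and $q_2$ from the large ones; apply the main term $nL/q_2^{1/n}$ for the $q_2$-part and absorb the $q_1$-part, on which $n^{\omega(q_1)} \le q_1^{(n-1)/(n(k-1))}$ by the lower bound on each prime power in $q_1$, into the loss in the exponent. Optimizing the split point against the condition $\omega(q) \le \log q / \log 2$ and $k \ge n+1$ gives the stated exponent. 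Either way, the structure is: (1) reduce to prime powers; (2) bound solutions per period by $n\,p^{e(1-1/n)}$; (3) multiply up via CRT; (4) convert each additive constant into a controlled power of the modulus using $k \ge n+1$; (5) reassemble.

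**Main obstacle.** The genuine difficulty is step~(4)/the optimization: making the elementary inequality $n^{\omega(q)} \le q^{\,(n-1)/(n(k-1))}$ (or its per-prime-power analogue) go through cleanly requires that every prime power exactly dividing $q$ be bounded below — but $q$ could legitimately have a factor of $2$, or several small prime factors, for which $p^{e_p}$ is tiny and the inequality fails locally. Handling these small moduli — either by peeling them off and noting their combined contribution is $O_{n,k}(1)$ absorbed into the ``$+1$'', or by being careful that the global product still beats the target even when a bounded number of local factors are bad — is where the real bookkeeping lives, and it is also why one needs $k \ge n+1$ (rather than just $k \ge 2$) and why the constant $k$ appears in front. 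Everything else — CRT, Vandermonde-free counting of polynomial congruences mod prime powers, and the arithmetic of exponents — is standard.
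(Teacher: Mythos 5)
Your proposal does not reach the lemma, and the obstruction is structural rather than a matter of bookkeeping. The lemma is really a statement about intervals that are much \emph{shorter} than the modulus: its content is that an interval of length $q^{\frac{1}{n}-\frac{n-1}{n(k-1)}}$ (note this is below $q^{1/n}$, hence far below $q$) contains fewer than $k$ solutions. Your plan rests on counting solutions per period of each prime power $p^{e_p}\,\|\,q$ and writing the local count in $I$ as roughly $\frac{L}{p^{e_p}}\nu_p+1$. That step silently assumes the $\nu_p$ roots are equidistributed within a period; when $L<p^{e_p}$ the only honest bound from a per-period count is $\nu_p$ itself, which can be as large as $p^{e_p(1-1/n)}$ (e.g.\ $P(x)=x^n/p^{e}$), i.e.\ enormous compared with the target $O(k)$. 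In other words, the non-clustering of roots inside a single period is exactly what has to be proved, so the proposal is circular at its core. The subsidiary steps also fail: the ``submultiplicativity'' you invoke is false, since $\prod_p\bigl(\frac{nL}{p^{e_p/n}}+n\bigr)$ expands into cross terms carrying higher powers of $L$ and does not collapse to $k\bigl(L/q^{\theta}+1\bigr)$; and your alternative route of splitting $q=q_1q_2$ and interpolating Lemma~\ref{qsol} can at best trade $n^{\omega(q)}$ for an additive term of size $q^{\epsilon}$, whereas the whole point of this lemma, as opposed to Lemma~\ref{qsol}, is to obtain a \emph{bounded} additive term (namely $k$) at the price of a worse exponent.

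The paper's proof uses a different mechanism, namely Konyagin's determinant method. From Lemma~1 of \cite{svk} and the Chinese remainder theorem one builds a single monic polynomial $U(x)=x^{n-d}\prod_{i=1}^{d}(x-\eta_i)\in\Z[x]$ of degree exactly $n$ whose zeros modulo $q$ contain those of $qP$. Supposing $k$ solutions $x_1<\dots<x_k$ lie in an interval of length $L\le q^{\frac{1}{n}-\frac{n-1}{n(k-1)}}$, one writes $k=ln+w$ and forms the $k\times k$ determinant with columns $1,x,\dots,x^{n-1},U(x),xU(x),\dots,x^{w-1}U(x)^l$ evaluated at the $x_m$. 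Because $U$ is monic of degree $n$, this determinant equals the Vandermonde $\prod_{i<j}(x_j-x_i)$, hence is a positive integer $<L^{\binom{k}{2}}$; on the other hand the arithmetic of the $U(x_m)$ forces divisibility by $q^{\frac{k^2}{2n}-\frac{k}{2}}$, and since $L^{\binom{k}{2}}\le q^{\frac{k(k-n)}{2n}}=q^{\frac{k^2}{2n}-\frac{k}{2}}$ this is a contradiction. The hypothesis $k\ge n+1$ is what makes the exponent $\frac{1}{n}-\frac{n-1}{n(k-1)}=\frac{k-n}{n(k-1)}$ positive and guarantees $l\ge1$, so that $U$ actually enters the determinant and supplies the $q$-divisibility; it is not there to absorb small prime factors. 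This spacing-by-divisibility argument is the missing idea, and no amount of care with small prime powers will rescue the density approach.
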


\begin{proof}
We write the factorization of $q$ into pairwise distinct primes as $p_1^{\lambda_1} \cdots$ $p_t^{\lambda_t}$. It is established in the proof of Lemma 1 of \cite{svk} that, for each fixed $1 \le m \le t$, there exists a polynomial $U_{m}(x)=\prod_{i=1}^{d}(x-\eta_{m,i})$ of degree $d$, where $\eta_{m,i} \in \Z$ for each $i=1,\dots,d$, whose zeros contain those of $qP(x)$ modulo $p_m^{\lambda_m}$. By the Chinese remainder theorem, we can solve the system
\begin{eqnarray*}
\eta_{i} \equiv \eta_{1,i} && \pmod{p_1^{\lambda_1}} \\
& \vdots &\\
\eta_{i} \equiv \eta_{t,i} && \pmod{p_t^{\lambda_t}}
\end{eqnarray*}
for $\eta_i$ for each $1 \le i \le d$. Thus we can construct the polynomial $U(x)=x^{n-d}\prod_{i=1}^{d}(x-\eta_{i})$ of degree $n$ whose zeros contain those of $qP(x)$ modulo $q$. We can now proceed with the argument. We first assume that $L \le q^{\frac{1}{n}-\frac{n-1}{n(k-1)}}$. Then, we assume that we can find $k$ solutions $x_1 < \dots < x_k$ to equation \eqref{qsys}. We write $k=:ln+w$ with $l \ge 1$, $1 \le w \le n$ and consider the determinant
$$
D:=\begin{vmatrix} 1 & x_1 & \cdots & x_1^{n-1} & U(x_1) & x_1U(x_1) & \cdots  & x_1^{w-1}U(x_1)^l \\
\vdots & \vdots & \cdots & \vdots & \vdots & \vdots & \cdots  & \vdots \\
1 & x_k & \cdots & x_k^{n-1} & U(x_k) & x_kU(x_k) & \cdots  & x_k^{w-1}U(x_k)^l \\
\end{vmatrix}.
$$
On the one hand, $D$ is equal to a Vandermonde determinant, so that in particular
\begin{equation}\label{qd1}
1 \le D = \prod_{1 \le i < j \le k}(x_j-x_i) < L^{\binom{k}{2}}.
\end{equation}
On the other hand,
\begin{equation}\label{qd2}
q^{\frac{k^2}{2n}-\frac{k}{2}} \mid D.
\end{equation}
By comparing \eqref{qd1} with \eqref{qd2} we get a contradiction. This proves that an interval of length $q^{\frac{1}{n}-\frac{n-1}{n(k-1)}}$ contains less than $k$ solutions. The general result follows by splitting the interval into at most $\frac{L}{q^{\frac{1}{n}-\frac{n-1}{n(k-1)}}}+1$ subintervals of length $q^{\frac{1}{n}-\frac{n-1}{n(k-1)}}$.
\end{proof}

\begin{lemma}\label{rep-loc}
Let $h \in C^{n}([X,2X])$ and $\Delta >0$ be a real number. Assume that the inequality $|h(x)| \le \Delta$ holds for all $x$ contained in an interval $I \subseteq [X,2X]$ of length $L$. Let $x_0$ be such that $|h(x_0)| \ge \sigma$ for some real number $\sigma > \Delta$. Let $K$ be the distance from $x_0$ to the furthest point in $I$. Then, for $n \ge 2$, we have
\begin{equation}\label{rep-arc}
K \ge \min\left(\left(\frac{\sigma n!}{2(2n)^n}\right)^{1/(n-1)}\frac{L}{\Delta^{1/(n-1)}},\left(\frac{\sigma n!}{2}\right)^{1/n}\frac{1}{|h^{(n)}(\xi)|^{1/n}}\right)
\end{equation}
for some $\xi \in I$. For $n=1$ we have
\begin{equation}\label{rep-arc1}
K \ge \frac{\sigma-\Delta}{|h^{(1)}(\xi)|}
\end{equation}
for some $\xi \in I$.
\end{lemma}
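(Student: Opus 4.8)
The plan is to handle $n=1$ directly by the mean value theorem and $n\ge 2$ by Lagrange interpolation with the remainder in Lagrange form. In both cases the first remark is that $x_0\notin I$, because $|h(x_0)|\ge\sigma>\Delta\ge\sup_{x\in I}|h(x)|$. For $n=1$, take $x_1$ to be the endpoint of $I$ furthest from $x_0$, so that $|x_0-x_1|=K$ and $|h(x_1)|\le\Delta$; the mean value theorem gives $h(x_0)-h(x_1)=h^{(1)}(\xi)(x_0-x_1)$ for some $\xi$ between $x_1$ and $x_0$, and then
\[
\sigma-\Delta\;\le\;|h(x_0)|-|h(x_1)|\;\le\;|h(x_0)-h(x_1)|\;=\;|h^{(1)}(\xi)|\,K ,
\]
which is \eqref{rep-arc1} (here $h^{(1)}(\xi)\neq 0$ automatically, since otherwise $\sigma\le\Delta$).

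For $n\ge 2$ one may assume $L>0$, the case $L=0$ being trivial. Writing $I=[a,a+L]$, I would use the $n$ equally spaced nodes $t_i:=a+\frac{(i-1)L}{n-1}$ ($i=1,\dots,n$), all lying in $I$. Let $p$ be the interpolating polynomial of $h$ of degree $\le n-1$ at these nodes and $\ell_1,\dots,\ell_n$ the corresponding Lagrange basis polynomials; evaluating the interpolation formula with Lagrange remainder at $x_0$ gives
\[
h(x_0)=\sum_{i=1}^{n}h(t_i)\,\ell_i(x_0)+\frac{h^{(n)}(\xi)}{n!}\prod_{i=1}^{n}(x_0-t_i),
\]
with $\xi$ in the smallest closed interval containing $t_1,\dots,t_n$ and $x_0$ (in particular $\xi\in[X,2X]$). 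Since $|x_0-t|\le K$ for every $t\in I$, the product is $\le K^n$; and from $|t_i-t_j|=\frac{L}{n-1}|i-j|$ one has $\prod_{j\neq i}|t_i-t_j|=\left(\frac{L}{n-1}\right)^{n-1}(i-1)!\,(n-i)!$, so a short computation using $\sum_{i=1}^{n}\binom{n-1}{i-1}=2^{n-1}$ and $(n-1)^{n-1}\le 2\,n^{n-1}$ gives
\[
\sum_{i=1}^{n}|\ell_i(x_0)|\;\le\;\frac{\bigl(2(n-1)\bigr)^{n-1}}{(n-1)!}\cdot\frac{K^{n-1}}{L^{n-1}}\;\le\;\frac{(2n)^{n}}{n!}\cdot\frac{K^{n-1}}{L^{n-1}} .
\]
Inserting $|h(t_i)|\le\Delta$ and $|h(x_0)|\ge\sigma$ into the identity yields
\[
\sigma\;\le\;\Delta\,\frac{(2n)^{n}}{n!}\cdot\frac{K^{n-1}}{L^{n-1}}\;+\;\frac{|h^{(n)}(\xi)|}{n!}\,K^{n} .
\]
At least one of the two summands is $\ge\sigma/2$. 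If it is the first, solving for $K$ gives $K\ge\left(\frac{\sigma n!}{2(2n)^n}\right)^{1/(n-1)}\frac{L}{\Delta^{1/(n-1)}}$; if it is the second (so that $h^{(n)}(\xi)\ne 0$), it gives $K\ge\left(\frac{\sigma n!}{2}\right)^{1/n}\frac{1}{|h^{(n)}(\xi)|^{1/n}}$. In either case $K$ is at least the minimum of these two quantities, which is \eqref{rep-arc}.

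No single step is a real obstacle: the only computation is the bound on the Lebesgue-type sum $\sum_i|\ell_i(x_0)|$, and equally spaced nodes together with the generous factor $(2n)^n/n!$ make it routine (a sharper constant is available via Chebyshev nodes on $I$, but is not needed). The one point requiring attention is the location of the remainder point $\xi$: the Lagrange and mean value remainders only place it in the smallest interval containing $I$ and $x_0$, which is contained in $[X,2X]$ but, since $x_0\notin I$, need not be contained in $I$; this causes no difficulty, and in particular whenever $h^{(n)}$ is constant — which is the case in the applications here, where $h$ differs from $f$ by a polynomial of degree at most $n$ — the value $h^{(n)}(\xi)$ does not depend on $\xi$ at all.
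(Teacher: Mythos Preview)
Your proof is correct and follows essentially the same approach as the paper: both use the divided-difference/Lagrange-interpolation remainder formula at $n$ nodes in $I$ together with the outside point $x_0$, differing only in the exact node placement and in how the resulting inequality is rearranged. You also correctly flag that the remainder point $\xi$ is only guaranteed to lie in the convex hull of $I$ and $x_0$ (hence in $[X,2X]$), not in $I$ itself; the paper's proof has the same feature, and as you note this is harmless in the applications since $h^{(n)}$ is constant there.
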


\begin{proof}
This is a restatement/generalization of the last part of Lemma $19$ of \cite{mnh}. Starting with the case $n \ge 2$, we will assume that $x_0 < x$ whenever $x \in I$, the other case being similar. Let $x^\prime$ be the point in $I$ which is the closest to $x_0$. We consider the $n$ values $x_i:=x^\prime+\frac{iL}{n}$ ($i=1,\dots,n$). Then, using Lemma $1$ of \cite{mnh:ps}, there exists a point $\xi$ such that
$$
\frac{h^{(n)}(\xi)}{n!}=\sum_{i=0}^{n}\frac{h(x_i)}{\prod_{j \neq i}(x_i-x_j)}.
$$
Hence, using the inequality $|x_0-x_j| \ge \frac{jK}{n}$, we have
\begin{eqnarray*}
\left|\frac{h^{(n)}(\xi)}{n!}\right| & \ge & \frac{\sigma}{\prod_{j \neq 0}|x_0-x_j|}-\Delta\sum_{i=1}^{n}\frac{1}{\prod_{j \neq i}|x_i-x_j|}\\
& \ge & \frac{\sigma}{K^n}-\frac{\Delta n^n}{KL^{n-1}}\sum_{i=1}^{n}\frac{1}{\prod_{j \neq i}|i-j|}\\
& \ge & \frac{\sigma}{K^n}-\frac{\Delta (2n)^n}{n!KL^{n-1}}.
\end{eqnarray*}
From this, we deduce that we have
$$
\left|\frac{h^{(n)}(\xi)}{n!}\right| \ge \frac{\sigma}{2K^n}\quad\mbox{or}\quad\frac{\Delta (2n)^n}{n!KL^{n-1}} \ge \frac{\sigma}{2K^n},
$$
implying that \eqref{rep-arc} follows. Inequality \eqref{rep-arc1} then follows from
$$
|Kh^{(1)}(\xi)| \ge |(x^\prime-x_0)h^{(1)}(\xi)|=|h(x^\prime)-h(x_0)| \ge \sigma-\Delta,
$$
where we used the mean value theorem.
\end{proof}

\section{Proof of Theorem 1.1}

The proof provided here is very similar to that of the main theorem of \cite{mnh:ps}.

The result is trivial if $\delta=0$ or if $\Gamma_{\delta} \cap \Z^2$ is empty.  Thus we can assume that $\delta > 0$ and that $\S > 0$. Now, without any loss in generality, we can assume that $\alpha_n$ is irrational. Indeed, if $\alpha_n$ is rational, we choose an irrational number $\alpha'_n$ that satisfies
$$
|\alpha'_n-\alpha_n| < \min\left(\rho,\frac{\delta}{(2X)^n}\right),
$$
for some $\rho>0$, and replace $\delta$ by $\delta':=2\delta$. An upper bound for the number of solutions to this new system is also an upper bound for $\S$. We therefore continue with the initial notation here. We will use it again in the proof of Corollary 1.1.

We start by imposing a structure on the set of points of $\Gamma_{\delta} \cap \Z^2$. We consider the points ordered according to their first coordinate. We take the first $n+2$ points $M_1, \dots, M_{n+2}$ and evaluate $\Lambda(M_1, \dots, M_{n+2})$. If it is not zero then we start over with $M_{n+2}$ as the first point. If it is zero then these $n+2$ points are on a curve $\gamma$ of equation $y=P(x)$  with $deg\ P \le n$. We then consider the maximal value of $j \ge n+2$ for which each $\Lambda(M_1,\dots,M_{n+1},M_{i})=0$ for each $i=n+2, \dots, j$. Since the points $M_1,\dots,M_{n+1}$ are sufficient to define $\gamma$, we deduce that all the points $M_1,\dots,M_j$ are on $\gamma$, but not $M_{j+1}$. We then continue with $M_j$ as the first point. This process is repeated until less than $n+2$ points remain to construct $\Lambda$.

We have thus defined a sequence of set of points, $\A_1,\dots,\A_J$, for which each one either has that {\bf (i)} of Lemma \ref{minor} holds or is a major arc. Our plan is to use the first case to estimate the total contribution of the major arcs that have only a few points. Indeed, assume that $\A$ is composed of the points $M_1,\dots, M_j$. By construction, we have $|\Lambda(M_1,\dots,M_{n+1},M_{j+1})| \ge 1$ from which it follows that $|x_{j+1}-x_1| \ge \left(\frac{1}{(n+2)\delta}\right)^{\frac{2}{n(n+1)}}$. We deduce that the total contribution of all the sets that have at most $V:=n^2+n$ (for $n \ge 2$) points, and the remaining at most $n+1$ points, is bounded by
$$
\le 2V((n+2)\delta)^{\frac{2}{n(n+1)}}X+V.
$$

It remains to estimate the total contribution of the major arcs with more than $V$ points. We \lq\lq reset the notation" and assume that the remaining sequence of major arcs is $\A_1,\dots,\A_J$. Also for each such major arc $\A_i$ we denote by $Q_i$ the denominator and by $L_i$ the length. It is convenient to differentiate two cases here. The first case is the contribution of all the major arcs for which the denominator is $\ge \frac{c_2}{\delta}$ for some sufficiently small positive constant $c_2(=c_2(n))$. In this case, we see that the argument in the proof of Lemma \ref{qdensity} tells us that $L_i \ge Q_i^{\frac{1}{n}-\frac{n-1}{n(V-1)}}$ so that the contribution satisfies $\#\A_i \ll \frac{L_i}{Q_i^{\frac{1}{n}-\frac{n-1}{n(V-1)}}}$ and the total contribution is therefore
$$
\ll \delta^{\frac{1}{n}-\frac{n-1}{n(V-1)}}\sum_{i=1}^{J} L_i \ll \delta^{\frac{1}{n}-\frac{n-1}{n(V-1)}}X \ll \delta^{\frac{2}{n(n+1)}}X.
$$
For $n=1$ it is simpler and we take for example $V=3$ with Lemma \ref{qsol} instead to get the corresponding results.

We now consider the second case, that is when all the major arcs are of denominator less than $\frac{c_2}{\delta}$. By looking only at this subsequence we may have consecutive major arcs that have the same equation. But by construction, after each (but at most one) major arc $\A$, we have a point $M=(x_0,y_0)$ that is not a point satisfying the equation of $\A$. This point $M$ thus satisfies
\begin{eqnarray*}
|f(x_0)-P(x_0)| & = & |f(x_0)-y_0 -P(x_0)+y_0|\\
& \ge & |P(x_0)-y_0|-|f(x_0)-y_0|\ge\frac{1}{q}-\delta > \frac{1}{2q} > \delta.
\end{eqnarray*}
We deduce from Lemma \ref{mult} that there are at most $n$ consecutive such major arcs with the same equation. Furthermore, we can extract the proper major arcs with the largest contribution. We thus \lq\lq reset the notation" one last time and consider the sequence $\A_1,\dots,\A_J$ of such proper major arcs. In the end, their total contribution will be multiplied by $n$. Again their respective denominators are noted $Q_i$ and their lengths $L_i$. Also, following Lemma \ref{rep-loc}, the distance between the first point in $\A_{i}$ and the closest point in $\A_{i+1}$ that is not on the equation of $\A_i$ is denoted by $K_i$ ($i=1,\dots,J-1$). All the conditions are met to apply Lemma \ref{rep-loc} to the function $\phi(x):=f(x)-P(x)$ to the interval in the variable $x$ that contains $\A_i$ and the first point in $\A_{i+1}$. For $n \ge 2$, there are two cases to be considered. In the first one, we have
$$
K_i \gg \frac{L_i}{(Q_i\delta)^{1/(n-1)}}.
$$
In the second one, we use Lemma \ref{lon} with the function $\phi$ to get
$$
\frac{1}{\left|\alpha_n-\frac{a_n}{q_n}\right|^{1/n}} \gg \frac{L_i}{\delta^{1/n}},
$$
so that the second case satisfies
$$
K_i \gg \frac{1}{\left(\left|\alpha_n-\frac{a_n}{q_n}\right|Q_i\right)^{1/n}} \gg \frac{L_i}{(Q_i\delta)^{1/n}}.
$$
For $n=1$ there is only one case to consider. In short, we deduce that the inequality
$$
K_i \gg \frac{L_i}{(Q_i\delta)^{1/n}}
$$
holds for all $n \ge 1$. For $n \ge 2$, since the number of points is at least $v:=n+2$, it follows from Lemma \ref{qdensity} that the relation $\#\A_i \ll \frac{L_i}{Q_i^{\frac{1}{n}-\frac{n-1}{n(v-1)}}}$ holds. Now, since we have
$$
\sum_{i=1}^{J-1} K_i \ll X,
$$
we deduce that the total contribution of this case is
$$
\sum_{i=1}^{J-1} \#\A_i \ll \sum_{i=1}^{J-1} \frac{L_i}{Q_i^{1/n-\frac{n-1}{n(v-1)}}} \ll \sum_{i=1}^{J-1} \frac{K_i(Q_i\delta)^{1/n}}{Q_i^{1/n-\frac{n-1}{n(v-1)}}} \ll \delta^{\frac{2}{n(n+1)}}X.
$$
For $n=1$, Lemma \ref{qdensity} can by replaced by Lemma \ref{qsol} and the proof is then similar. We are thus left with at most one proper major arc and this one can absorb a contribution of size $\ll_{n} 1$ by increasing the multiplicative constant. The proof is thus complete.

\section{Proof of Corollary 1.1}

We apply Theorem 1.1. We clearly only need to evaluate $\mathcal{R}$. We assume that the polynomial \eqref{P} realizes the maximum. We will assume that $\delta > 0$ since otherwise we can take $\delta:= \frac{1}{2}\min_{\substack{x \in [X,2X] \cap \Z \\ \| P(x)\|>0}} \| P(x)\|$. We assume also that the maximum is larger than $n^2+n$ since otherwise the result is clear. It will be convenient to have an upper bound for the denominator of such a polynomial. Since $P(x)$ is of degree $d$, the equation $y=P(x)$ is completely determined by $d+1$ points $(x_1,y_1),\dots,(x_{d+1},y_{d+1})$. We find it by expanding
$$
0=\begin{vmatrix} 1 & x & x^2 & \cdots & x^d & y\\ 1 & x_1 & x_1^2 & \cdots & x_1^d & y_1\\ \vdots & \vdots & \vdots & \cdots & \vdots & \vdots \\ 1 & x_{d+1} & x_{d+1}^2 & \cdots & x_{d+1}^d & y_{d+1} \end{vmatrix}
$$
and in particular we retrieve the denominator $q$ as a divisor of the coefficient of $y$. This coefficient is a Vandermonde determinant so that it is bounded by $X^{\binom{n+1}{2}}$ since $d \le n$. This is the desired upper bound. We deduce from the well known inequality $\omega(q) \ll \frac{\log q}{\log \log q}$ ($q \ge 3$) that, for each fixed $n \ge 1$ and $\epsilon > 0$, we have $n^{\omega(q)} \ll_{n,\epsilon} X^\epsilon$.

We will examine three cases separately:
\begin{eqnarray}
& q \ge \frac{c_3}{\delta},\\
& q < \frac{c_3}{\delta} \quad \mbox{and} \quad |f(x)-P(x)| \le \frac{1}{3(2e)^nq} \quad \mbox{for each}\ x \in [X,2X],\\
& q < \frac{c_3}{\delta} \quad \mbox{and} \quad |f(z)-P(z)| > \frac{1}{3(2e)^nq} \quad \mbox{for a}\ z \in [X,2X],
\end{eqnarray}
where $c_3(=c_3(n))$ is a sufficiently small positive constant.

In the first case, the result follows from Lemma \ref{qsol} since we then find a contribution
$$
\ll \delta^{1/n}X+X^\epsilon.
$$

In the third case, since there are at least  $n^2+n+1$ solutions, we can choose (by Lemma \ref{mult}) the proper major arc $\A$ of length $L$ with the largest contribution. Then, the function $\phi(x):=f(x)-P(x)$ varies of at least $\gg 1/q$ between $z$ and the furthest $x$ such that $ (x,P(x)) \in \A$. By using Lemma \ref{rep-loc} as in the previous demonstration, we obtain
$$
K := |z-x| \gg \frac{L}{(q\delta)^{1/n}}.
$$
By Lemma \ref{qsol}, we obtain a contribution of at most
$$
\ll \frac{L}{q^{1/n}}+X^\epsilon \ll \delta^{1/n}K + X^\epsilon \ll \delta^{1/n}X + X^\epsilon.
$$

In the second case, using the notation introduced in the proof of Theorem 1.1 and then Lemma \ref{lon}, we obtain
$$
X \le |I| < \left(\frac{1}{3q\left|\alpha'_n-\frac{a_n}{q_n}\right|}\right)^{1/n},
$$
so that we have
$$
\left|\alpha'_n-\frac{a_n}{q_n}\right| < \frac{1}{3qX^n}\quad \mbox{and therefore}\quad \left|\alpha_n-\frac{a_n}{q_n}\right| < \frac{1}{2qX^n},
$$
if $\rho$ is taken small enough.

Now, let $\frac{r}{s}$ be any fraction with all the requested properties. If $s \le 2q_n$, the result follows from Lemma \ref{qsol}. We can therefore assume that $s > 2q_n$, in which case
$$
\frac{1}{sq_n} \le \left|\frac{r}{s}-\frac{a_n}{q_n}\right| \le \left|\frac{r}{s}-\alpha_n\right|+\left|\alpha_n-\frac{a_n}{q_n}\right| < \frac{1}{s^2} + \frac{1}{2qX^n}.
$$
From this we deduce that $s > X^n$ which is the desired contradiction and the result follows.

\section{Proof of Corollary 1.2}

The hypothesis \eqref{hypc2} implies that $\alpha_n$ is irrational, so we do not change its value. We apply Theorem 1.1. We only need to evaluate $\mathcal{R}$. We assume that the polynomial \eqref{P} realizes the maximum. By Lemma \ref{qsol} and the preceding proof we have the inequality
$$
\mathcal{R} \ll \frac{X}{q^{1/n}} +X^{\epsilon}.
$$
The result is clear if $q \gg \frac{1}{\delta^{2/(n+1)}}$. We may thus assume that $q_n \le q \ll \frac{1}{\delta^{2/(n+1)}}$. We set
$$
\left|\alpha_n-\frac{a_n}{q_n}\right| = \frac{1}{q^{\theta}_n}.
$$
By Lemma \ref{lon} we can write
$$
 L \ll (\delta q_n^{\theta})^{1/n}
$$
and so by using Lemma \ref{qsol} we find that the contribution is
$$
\ll \frac{L}{q_n^{1/n}} + X^{\epsilon} \ll (\delta q_n^{\theta-1})^{1/n}+X^\epsilon.
$$
The conclusion follows from the fact that $\delta\left( \frac{1}{\delta^{2/(n+1)}}\right)^{\frac{n+3}{2}-1} \ll 1$.

\vspace{1.5cm}

\vspace{-3ex}

\vspace{1.5cm}

\noindent\textbf{Patrick Letendre}\\
D\'ep. math\'ematiques et statistique\\
Universit\'e Laval\\
Qu\'ebec\\
Qu\'ebec G1V 0A6\\
Canada\\
{\tt Patrick.Letendre.1@ulaval.ca}\\

\end{document}